\theoremstyle{plain}
\newtheorem{thm}{Theorem}[section]
\newtheorem{prop}[thm]{Proposition}
\newtheorem{cor}[thm]{Corollary}
\newtheorem{conj}[thm]{Conjecture}
\newtheorem*{thm*}{Theorem}
\theoremstyle{definition}
\newtheorem*{defi*}{Definition}
\newtheorem*{nota*}{Notation}
\newcommand{\Q}{\mathbb{Q}}
\newcommand{\Z}{\mathbb{Z}}
\newcommand{\Ns}{\mathbb{Z}_{>0}}
\newcommand{\N}{\mathbb{Z}_{\geq0}}
\newcommand{\C}{\mathbb{C}}
\newcommand{\R}{\mathbb{R}}
\renewcommand{\H}{\mathbb{H}}
\newcommand{\tr}{\operatorname{tr}}
\renewcommand{\i}{\mathrm{i}}% imaginary unit
\newcommand{\e}{\mathrm{e}}% Euler's number
\newcommand{\Aut}{\operatorname{Aut}}
\newcommand{\rk}{\operatorname{rk}}
\newcommand{\voa}{vertex operator algebra}
\newcommand{\VOA}{Vertex Operator Algebra}
\newcommand{\fpvosa}{fixed-point vertex operator subalgebra}
\newcommand{\vac}{\textbf{1}}
\newcommand{\ch}{\operatorname{ch}}
\newcommand{\id}{\operatorname{id}}
\newcommand{\eps}{\varepsilon}
\newcommand{\SLZ}{\operatorname{SL}_2(\mathbb{Z})}
\newcommand{\ee}{\mathfrak{e}}
\newcommand{\hh}{\mathfrak{h}}
\newcommand{\h}{{L_\C}}
\renewcommand{\S}{\mathcal{S}}
\newcommand{\qdim}{\operatorname{qdim}}
\newcommand{\strathol}{strongly rational, holomorphic}
\newcommand{\strat}{strongly rational}
\newlength{\myl}
\begin{document}

\title[Orbifold Vertex Operator Algebras and the Positivity Condition]{Orbifold Vertex Operator Algebras and\\ the Positivity Condition}
\author[Sven Möller]{Sven Möller}
\address{Rutgers University, Piscataway, NJ, United States of America}
\email{\href{mailto:math@moeller-sven.de}{math@moeller-sven.de}}
\dedicatory{Für Heike, in Erinnerung.}

\begin{abstract}
In this note we show that the irreducible twisted modules of a holomorphic, $C_2$-cofinite \voa{} $V$ have $L_0$-weights at least as large as the smallest $L_0$-weight of $V$. Hence, if $V$ is of CFT-type, then the twisted $V$-modules are almost strictly positively graded. This in turn implies that the \fpvosa{} $V^G$ for a finite, solvable group of automorphisms of $V$ almost satisfies the positivity condition. These and some further results are obtained by a careful analysis of Dong, Li and Mason's twisted modular invariance.
\end{abstract}

\maketitle

\setcounter{tocdepth}{1}
\tableofcontents

\section*{Introduction}
This note is concerned with \voa{}s and their representation theory. For an introduction to these topics we refer the reader to \cite{FLM88,FHL93,LL04}. Just note that for the definition of twisted modules we follow the modern sign convention (see, e.g.\ \cite{DLM00} as opposed to \cite{Li96}).

Let $V$ be a \voa{}. Recall that $V$ has a $\Z$-grading (by $L_0$-eigenvalues) $V=\bigoplus_{k\in\Z} V_k$ with $\dim(V_k)<\infty$ for all $k\in\Z$ and $\dim(V_k)=0$ for $k\ll 0$. The smallest $k\in\Z$ such that $\dim(V_k)>0$ is called the \emph{conformal weight} of $V$ and denoted by $\rho(V)$.

Similarly, the $L_0$-grading of an irreducible $V$-module $W$ takes values in $\rho(W)+\N$ for some \emph{conformal weight} $\rho(W)\in\C$ with $\dim(W_{\rho(W)})>0$.

Often, \voa{}s are assumed to be of \emph{CFT-type}, in which case the conformal weight $\rho(V)=0$ and $\dim(V_0)=1$. Since the definition of a \voa{} includes a non-zero \emph{vacuum vector} $\vac\in V_0$, this means that in a \voa{} of CFT-type the vacuum vector is the up to scalar unique vector of smallest $L_0$-eigenvalue.

Naturally extending this property to all the modules of $V$ we arrive at the following definition.\footnote{Indeed, if $V$ is rational and of CFT-type, then the positivity condition implies that the vacuum vector is the up to scalar unique vector of smallest $L_0$-eigenvalue in all $V$-modules.}
\begin{defi*}[Positivity Condition]
Let $V$ be a simple \voa{}. $V$ is said to satisfy the \emph{positivity condition} if for all irreducible $V$-modules $W$, $\rho(W)\in\R_{>0}$ if $W\not\cong V$ and $\rho(V)=0$.
\end{defi*}
The positivity condition is a necessary assumption for some important results concerning the relationship between simple currents, quantum dimensions and the $S$-matrix (see \cite{DJX13}, e.g.\ Proposition~4.17) and \voa{}s with group-like fusion, i.e.\ \voa{}s whose irreducible modules are all simple currents (see Section~2.2 in \cite{Moe16} and Section~3 in \cite{EMS15}). Some of these aspects will be briefly discussed in Section~\ref{sec:qdim}.

In this text we study the positivity condition for certain \fpvosa{}s, i.e.\ \voa{}s $V^G$ where $V$ is a holomorphic \voa{} and $G$ a finite group of automorphisms of $V$.

As always, when making non-trivial statements about abstract \voa{}s it is necessary to restrict to a subclass of suitably regular \voa{}s. We follow \cite{DM04b} and call a \voa{} \emph{\strat{}} if it is rational (as defined in \cite{DLM97}, for example), $C_2$-cofinite, self-contragredient (or self-dual) and of CFT-type. A \voa{} $V$ is called \emph{holomorphic} if it is rational and the only irreducible $V$-module is $V$ itself (implying that $V$ is simple and self-contragredient).

\subsection*{Acknowledgements}
The author would like to thank Nils Scheithauer and Jethro van Ekeren for helpful discussions. The author is grateful to Toshiyuki Abe for organising the workshop ``Research on algebraic combinatorics and representation theory of 
finite groups and vertex operator algebras'' at the RIMS in Kyoto in December 2018. This research note was prepared for the proceedings of this workshop (to appear in the RIMS Kôkyûroku series).

\section{Conjecture and Main Result}

Let $V$ be a \strathol{} \voa{} (which implies by the modular invariance result of \cite{Zhu96} that the central charge $c$ of $V$ is in $8\Ns$). It is shown in \cite{DLM00} that for any automorphism $g$ of $V$ of finite order $n\in\N$, $V$ possesses an up to isomorphism unique irreducible $g$-twisted $V$-module, denoted by $V(g)$. Note that $V(\id)\cong V$. Moreover, the conformal weight $\rho(V(g))$ of $V(g)$ is rational. In fact, it is shown in \cite{EMS15,Moe16} that $\rho(V(g))\in\frac{1}{n^2}\Z$. The $L_0$-eigenvalues of $V(g)$ lie in $\rho(V(g))+\frac{1}{n}\N$.

We hypothesise that the possible values of $\rho(V(g))$ can be further constrained (see also Conjecture~\ref{conj:main2} for a more general statement):
\begin{conj}[Main Conjecture]\label{conj:main}
Let $V$ be a \strathol{} \voa{} and $g\neq\id$ an automorphism of $V$ of finite order. Then $\rho(V(g))>0$.
\end{conj}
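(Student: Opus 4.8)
The plan is to establish the conjecture — in fact the stronger statement that \emph{every} $L_0$-weight occurring in $V(g)$ is at least $\rho(V)$ — by combining the twisted modular invariance of \cite{DLM00} with a completely elementary estimate on an \emph{untwisted} trace function, and then reading off the conformal weight from the growth of characters as $\Im\tau\to\infty$. As will be clear, in its straightforward form this strategy only yields the non-strict inequality $\rho(V(g))\ge 0$; the strict inequality of the conjecture needs one more idea.

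Let $n$ be the order of $g$ and put $q=\e^{2\pi\i\tau}$ and $Z(\tau):=\tr_V(g\,q^{L_0-c/24})=\sum_{k\in\N}\tr_{V_k}(g)\,q^{k-c/24}$. Since $g$ acts with finite order on each finite-dimensional space $V_k$, we have $|\tr_{V_k}(g)|\le\dim V_k$, and — using that $g$ fixes the vacuum, so $\tr_{V_0}(g)=1$ — equality in this bound together with equality in the triangle inequality below can hold simultaneously only if $\tr_{V_k}(g)=\dim V_k$, i.e.\ $g|_{V_k}=\id$, for \emph{every} $k$, which is impossible as $g\neq\id$. Hence, for every $s>0$,
\[
|Z(\i s)|\;\le\;\sum_{k\in\N}|\tr_{V_k}(g)|\,\e^{-2\pi s(k-c/24)}\;\le\;\sum_{k\in\N}\dim(V_k)\,\e^{-2\pi s(k-c/24)}\;=\;\ch_V(\i s),
\]
with at least one inequality strict, so $|Z(\i s)|<\ch_V(\i s)$. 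Now I invoke \cite{DLM00}: as $V$ is holomorphic, $V(g)$ is the unique irreducible $g$-twisted module, and under $S\colon\tau\mapsto-1/\tau$ the function $Z(-1/\tau)$ is a scalar multiple, of modulus $1$, of the character $\ch_{V(g')}(\tau)$ of the (unique) $g'$-twisted module for some $g'\in\{g,g^{-1}\}$; write $Z(-1/\tau)=\sigma\,\ch_{V(g')}(\tau)$ with $|\sigma|=1$. Since $V$ is holomorphic and $8\mid c$, Zhu's theorem \cite{Zhu96} gives $\ch_V(-1/\tau)=\ch_V(\tau)$. Evaluating at $\tau=\i t$ and using $-1/(\i t)=\i/t$ yields
\[
\ch_{V(g')}(\i t)\;=\;|Z(\i/t)|\;<\;\ch_V(\i/t)\;=\;\ch_V(\i t)\qquad(t>0).
\]
Letting $t\to\infty$ and comparing leading exponentials, $\ch_V(\i t)\sim\dim(V_{\rho(V)})\,\e^{2\pi t(c/24-\rho(V))}$ against $\ch_{V(g')}(\i t)\sim\dim(V(g')_{\rho(V(g'))})\,\e^{2\pi t(c/24-\rho(V(g')))}$, forces $\rho(V(g'))\ge\rho(V)$. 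As $g$ ranges over all automorphisms of finite order so does $g'$, so $\rho(V(g))\ge\rho(V)=0$ for all of them — the non-strict form of the conjecture.

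The hard step is upgrading ``$\ge$'' to the strict ``$>$''. The estimate does squeeze out a little extra: if $\rho(V(g))=0$, then letting $t\to\infty$ in $\ch_{V(g)}(\i t)<\ch_V(\i t)$ forces $\dim V(g)_0\le\dim V_0=1$, hence $\dim V(g)_0=1$, and matching the next coefficients shows that the $q$-expansion of $\ch_{V(g)}$ must coincide with that of $\ch_V$ up to the first degree at which the coefficient of $\ch_{V(g)}$ drops strictly below that of $\ch_V$ (which in particular forces all fractional-degree terms of $\ch_{V(g)}$ below that degree to vanish). But this does not force $V(g)$ to be $\Z$-graded — vectors of positive fractional $L_0$-weight at higher levels remain possible — so one cannot appeal to faithfulness of the twisted vertex operators on the irreducible module $V(g)$ (which, the vacuum being fixed, would already contradict $g\neq\id$) to finish. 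I expect this boundary case, namely excluding a one-dimensional weight-$0$ space in $V(g)$ when $g\neq\id$, to be the main obstacle; overcoming it appears to require extra input — for instance an induction on $|G|$ that feeds the positivity condition for the smaller \fpvosa{} $V^{\langle g\rangle}$ back into the analysis of the weight-$0$ space of $V(g)$, or control of the functions $\tr_{V(g^i)}(g^j\,q^{L_0-c/24})$ beyond their leading terms.
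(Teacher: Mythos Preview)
The statement you are attempting is Conjecture~\ref{conj:main}; the paper does \emph{not} prove it. What the paper actually proves is the weaker Theorem~\ref{thm:main} (equivalently Theorem~\ref{thm:main2}): $\rho(V(g))\ge 0$, and $\dim V(g)_0=1$ in the boundary case. Your proposal arrives at exactly the same endpoint by exactly the same mechanism: bound $|\tr_{V_k}g|\le\dim V_k$, apply the $S$-transformation from \cite{DLM00} relating $Z_{\id,g}$ to $\ch_{V(g)}$, compare with the untwisted identity $\ch_V(-1/\tau)=\ch_V(\tau)$, and read off the inequality on conformal weights from the $t\to\infty$ asymptotics along $\tau=\i t$. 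Your final paragraph correctly identifies the obstruction to the strict inequality and is in line with the paper's own position that the conjecture remains open.

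Two small technical remarks on your write-up. First, the assertion that the modular constant $\sigma$ has modulus~$1$ is not something you can simply read off from \cite{DLM00}; the paper treats this constant as an unknown $\lambda_{\id,g}\in\C^\times$ and shows that the growth comparison still yields $\rho(V(g))\ge\rho(V)$ regardless of $|\lambda_{\id,g}|$, only afterwards invoking \cite{EMS15} (Proposition~5.5) for $\lambda_{\id,g}=1$ to get the clean bound $\dim V(g)_0\le\dim V_0$. Your argument survives without $|\sigma|=1$ for the inequality $\rho(V(g))\ge 0$, but the bound $\dim V(g)_0\le 1$ does use it, so you should either justify it or absorb $|\sigma|$ into the estimate as the paper does. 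Second, there is no need for the detour through a possibly different $g'\in\{g,g^{-1}\}$: in the holomorphic setting the $S$-transform of $Z_{\id,g}$ is proportional to $Z_{g,\id}=\ch_{V(g)}$ directly (equation~\eqref{eq:dlm}). Your extra observation that the strict inequality $\ch_{V(g)}(\i t)<\ch_V(\i t)$ forces the low fractional-degree coefficients of $\ch_{V(g)}$ to vanish is correct and goes slightly beyond what the paper records, but, as you note, it does not close the gap to the conjecture.
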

This is closely related to the positivity condition for \fpvosa{}s.
\begin{prop}\label{prop:cor}
Let $V$ be a \strathol{} \voa{} such that Conjecture~\ref{conj:main} is satisfied for $V$. Let $G\leq\Aut(V)$ be a finite, solvable group of automorphisms of $V$. Then $V^G$ satisfies the positivity condition.
\end{prop}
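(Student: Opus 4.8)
The plan is to reduce the claim for a solvable group $G$ to the case of a cyclic group by induction, exploiting the known structure theory of holomorphic orbifolds. For a cyclic group $\langle g\rangle$ of order $n$, the irreducible $V^{\langle g\rangle}$-modules are precisely the eigenspaces $V(g^i)^{(j)}$ of $g$ acting on the $g^i$-twisted module $V(g^i)$, for $i,j\in\Z/n\Z$ (this is the decomposition established in \cite{MT04,DM97,DRX17} and used throughout \cite{EMS15,Moe16}). Hence I would first treat the cyclic case directly: by Conjecture~\ref{conj:main}, $\rho(V(g^i))>0$ whenever $g^i\neq\id$, so every irreducible module coming from a nontrivial twist is positively graded. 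The only modules built from $V(\id)\cong V$ itself are the $g$-eigenspaces $V^{(j)}$ of $V$; for $j\neq 0$ these are nontrivial $V^{\langle g\rangle}$-modules, and since $V$ is of CFT-type with $\dim V_0=1$ and $V_0=\C\vac$ fixed by $g$, the space $V^{(j)}$ contains no vectors of weight $0$, so $\rho(V^{(j)})\geq 1>0$. The module $V^{(0)}=V^{\langle g\rangle}$ has conformal weight $0$ with one-dimensional weight-zero space spanned by $\vac$. This establishes the positivity condition when $G$ is cyclic.

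Next I would run the induction on $|G|$. Since $G$ is finite and solvable, if $G$ is nontrivial it has a nontrivial abelian, hence cyclic-containing, normal structure; more precisely, pick a normal subgroup $N\trianglelefteq G$ with $G/N$ cyclic of prime order $p$ (take $N$ to be a subgroup containing the derived subgroup with cyclic prime-order quotient). By induction $V^N$ satisfies the positivity condition, and $V^N$ is again \strathol{}'s orbifold-theoretic analogue: it is strongly rational (by \cite{CM16,McR21} or the relevant regularity-preservation results), and $G/N$ acts on $V^N$ as a cyclic group of automorphisms with $(V^N)^{G/N}=V^G$. The subtlety is that $V^N$ need not be holomorphic, so the cyclic step above — which used holomorphicity of $V$ via Conjecture~\ref{conj:main} — does not apply verbatim. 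Instead I would invoke the general description of the irreducible $(V^N)^{\langle\sigma\rangle}$-modules for a cyclic $\sigma$ acting on a strongly rational VOA: each arises inside a $\sigma^i$-twisted $V^N$-module, and each such twisted module decomposes into irreducible $V^N$-modules (by rationality of $V^N$) whose conformal weights I must control.

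The key input here is that the conformal weight of any $\sigma^i$-twisted $V^N$-module, $\sigma^i\neq\id$, is positive. This is where I expect the main obstacle to lie, and where the paper's careful analysis of twisted modular invariance (à la \cite{DLM00,Zhu96}) must enter: one needs a lower bound on twisted conformal weights that does not assume holomorphicity of the ambient VOA. The cleanest route is to lift $\sigma$ to an automorphism $\tilde g$ of the original holomorphic $V$ (using that every automorphism of $V^N$ appearing here is inner in the sense of being induced from $\Aut(V)$, as $N\trianglelefteq G\leq\Aut(V)$ and $G$ normalises $N$), so that every irreducible $V^G$-module is a $\tilde g$-eigenspace inside some $V(\tilde g^i)$ with $\tilde g\in G$. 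Then positivity of $\rho(V(\tilde g^i))$ for $\tilde g^i\neq\id$ is exactly Conjecture~\ref{conj:main} applied to $V$, and the weight-zero analysis of the untwisted part $V^{(0)}_{\text{over }G}$ proceeds as in the cyclic case using $V_0=\C\vac$. In other words, the induction is really a bookkeeping device that lets one realise all irreducible $V^G$-modules as graded pieces of $\{V(g):g\in G\}$, and once that identification is in place the positivity condition follows immediately from Conjecture~\ref{conj:main} together with the CFT-type hypothesis on $V$. I would close by remarking that the trivial module $V^G$ has $\rho(V^G)=0$ with $\dim(V^G_0)=1$, so the condition $\rho(V)=0$ in the definition is met.
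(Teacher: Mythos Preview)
Your argument is correct in its conclusion and ultimately rests on the same key fact the paper uses, but you arrive there by a considerably more circuitous route. The paper's proof is two lines: it first states (and cites to \cite{DRX15}, building on Miyamoto's idea in \cite{Miy10}, with \cite{Miy15,CM16} ensuring $V^G$ is again \strat{}) the Proposition that \emph{every irreducible $V^G$-module appears as a $V^G$-submodule of some $g$-twisted $V$-module $V(g)$ with $g\in G$}. Once that is known, the positivity condition is immediate: if the ambient $V(g)$ has $g\neq\id$, Conjecture~\ref{conj:main} gives $\rho(V(g))>0$; if $g=\id$, the CFT-type assumption $V_0=\C\vac$ forces every irreducible $V^G$-submodule of $V$ other than $V^G$ itself to have positive conformal weight. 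No induction on $|G|$ is needed.

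Your induction via a normal subgroup $N$ with cyclic quotient essentially re-derives this Proposition by hand, and you yourself note that ``the induction is really a bookkeeping device'' for realising the irreducible $V^G$-modules inside the $V(g)$. Two minor points: first, the step where you need positivity for $\sigma$-twisted $V^N$-modules and propose to lift $\sigma$ to $\tilde g\in G$ is exactly where the DRX15 machinery does the work, so you may as well invoke it at the outset for $G$ rather than for $G/N$ acting on $V^N$; second, for non-abelian $G$ the irreducible $V^G$-submodules of $V(g)$ are not literally $g$-eigenspaces (the decomposition is governed by representations of the centraliser of $g$ in $G$), though this does not affect the conformal-weight bound you need, since submodule containment $W\subseteq V(g)$ already gives $\rho(W)\geq\rho(V(g))$.
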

The proof of this statement is immediate with the following result:
\begin{prop}
Let $V$ be a simple, \strat{} \voa{} and $G$ a finite, solvable group of automorphisms of $V$. Then every irreducible $V^G$-module appears as a $V^G$-submodule of the $g$-twisted $V$-module for some $g\in G$.
\end{prop}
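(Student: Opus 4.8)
The plan is to induct on $|G|$, using solvability to pass to a cyclic quotient of prime order at each step, and to handle that cyclic case with the twisted Zhu algebra machinery of Dong--Li--Mason and Miyamoto--Tanabe. To make the induction run through the intermediate fixed-point subalgebras, I would prove the slightly stronger statement that, for every automorphism $\theta$ of $V$ normalising $G$ (so that $\theta$ acts on $V^G$, with restriction $\bar\theta$), each irreducible $\bar\theta$-twisted $V^G$-module embeds, as a $V^G$-module, into some $g$-twisted $V$-module with $g$ in the coset $\theta G$; the proposition is the case $\theta=\id$, where ``$\bar\theta$-twisted'' means ordinary.

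If $G$ is trivial there is nothing to prove. Otherwise pick a normal subgroup $N\trianglelefteq G$ with $G/N$ cyclic of prime order $p$ --- chosen $\theta$-stable, which in general forces one instead to take $G/N$ elementary abelian, but I suppress this refinement --- and a preimage $h\in G$ of a generator; set $U:=V^N$. By Dong--Mason's quantum Galois theory $U$ is simple; it is of CFT-type and self-dual because $V$ is (the invariant form of $V$ restricts non-degenerately to the $N$-fixed subspace), and it is rational and $C_2$-cofinite because, built along a chief series of $N$, it is an iterated prime cyclic orbifold of strongly rational \voa{}s, each stage regular by Miyamoto's cyclic-orbifold regularity theorem. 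Thus $U$ is simple and \strat{}. Since $N$ is normal, $G$ acts on $U$ through $G/N$ and $\theta$ acts on $U$; writing $\sigma$ for the automorphism of $U$ induced by $h$, one has $\ord(\sigma)=p$ and $U^{\langle\sigma\rangle}=V^G$. Now let $X$ be an irreducible $\bar\theta$-twisted $V^G$-module. The cyclic case, applied to the orbifold $U^{\langle\sigma\rangle}\subseteq U$ and the automorphism $\theta|_U$, embeds $X$ into an irreducible $\theta'|_U$-twisted $U$-module $W$ for some $\theta'\in\theta\langle h\rangle$; and the inductive hypothesis, applied to $V$, the smaller solvable group $N$, and $\theta'$ (which normalises $N$), embeds $W$, hence $X$, into a $g$-twisted $V$-module with $g\in\theta'N\subseteq\theta G$. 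Restricting to $V^G$ closes the induction.

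The cyclic case is the real content: for a prime cyclic orbifold $U^{\langle\sigma\rangle}\subseteq U$ carrying an extra symmetry $\theta|_U$, one must show that every irreducible $\bar\theta$-twisted $U^{\langle\sigma\rangle}$-module appears inside an irreducible twisted $U$-module, \emph{and} pin down which of the finitely many twisted sectors it appears in. I would obtain this by combining the $\langle\sigma\rangle$-eigenspace decomposition of $U$ with the twisted Zhu algebras $A_{\sigma^j}(U)$ of Dong--Li--Mason and Miyamoto--Tanabe's comparison of the (twisted) Zhu algebra of $U^{\langle\sigma\rangle}$ with $\bigoplus_j A_{\sigma^j}(U)$, the strong rationality of $U$ and of $U^{\langle\sigma\rangle}$ guaranteeing that all the module categories in play are semisimple with finitely many simple objects, so that the Zhu-algebra comparison is conclusive. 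Besides this, the essential external input --- used at every stage --- is the regularity of cyclic (hence, by iteration, solvable) orbifolds of \strat{} \voa{}s; the one genuinely delicate point in the inductive bookkeeping is, as indicated, choosing the normal subgroup at each step compatibly with the auxiliary automorphism $\theta$, which in general replaces ``prime cyclic orbifold'' by ``prime abelian orbifold'' in the base step --- handled by the same Zhu-algebra method.
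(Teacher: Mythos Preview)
Your approach is genuinely different from the paper's. The paper does not argue inductively at all: it simply observes that the statement is exactly Theorem~3.3 of Dong--Ren--Xu \cite{DRX15}, and that the hypotheses of that theorem (in particular, the strong rationality of $V^G$) are met because of the Miyamoto and Carnahan--Miyamoto regularity results \cite{Miy15,CM16} for solvable orbifolds. Solvability of $G$ enters only to guarantee that $V^G$ is \strat{}; once that is known, \cite{DRX15} delivers the conclusion in one stroke.

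What you are sketching is, in effect, a direct proof of (a mild strengthening of) that cited theorem, by devissage along a chief series of $G$. The outline is reasonable and is close in spirit to how such results are actually proved, but as written it is a sketch with acknowledged gaps: you yourself flag that choosing a $\theta$-stable normal subgroup forces the base case from ``prime cyclic'' to ``elementary abelian'', and you suppress that refinement; and the crucial cyclic (or elementary abelian) step---that every irreducible \emph{twisted} module over $U^{\langle\sigma\rangle}$ embeds into a suitable twisted $U$-module---is asserted via Zhu-algebra comparison rather than carried out. These are not fatal, but they are exactly the technical work that \cite{DRX15} packages for you. The payoff of your route would be a more self-contained argument (and a sharper control over which coset $\theta G$ the twisted module lands in); the payoff of the paper's route is brevity and a clean separation of concerns.
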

\begin{proof}
The idea of the proof was first written down by Miyamoto (see proof of Lemma~3 in \cite{Miy10}, where only the case of cyclic $G$ is covered). For the stated generality we need Theorem~3.3 in \cite{DRX15}. Its hypotheses (see also Remark 3.4 in \cite{DRX15}) are satisfied since $V^G$ is again \strat{} by \cite{Miy15,CM16}. For this, the group $G$ has to be solvable.
\end{proof}
\begin{proof}[Proof of Proposition~\ref{prop:cor}]
Let $W$ be an irreducible $V^G$-module. If $W$ appears as a submodule of $V(g)$ for some non-trivial $g\in G$, then $\rho(W)>0$. So, let $W$ be an irreducible $V^G$-submodule of $V$. Since any automorphism $g\in\Aut(V)$ by definition fixes the vacuum vector $\vac$, $\rho(V^G)=0$ (note that $V^G$ is again simple, i.e.\ irreducible) but no other irreducible $V^G$-submodule $W$ of $V$ can contain a vector of weight 0 because $V_0=\C\vac$ since $V$ is of CFT-type.
\end{proof}

In the following we will prove a statement that is only slightly weaker than Conjecture~\ref{conj:main}. We are not aware of any counterexamples to the conjecture but would be interested in learning of such. Section~\ref{sec:qdim} explores some of the consequences of Conjecture~\ref{conj:main} not being satisfied.
\begin{thm}[Main Result]\label{thm:main}
Let $V$ be a \strathol{} \voa{} and $g$ an automorphism of $V$ of finite order. Then $\rho(V(g))\geq 0$ and $\dim(V(g)_0)=1$ if $\rho(V(g))=0$.
\end{thm}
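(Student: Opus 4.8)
The plan is to extract the bound $\rho(V(g)) \geq 0$ from the modular invariance of trace functions on twisted modules, following Dong–Li–Mason \cite{DLM00} and the refinements in \cite{EMS15,Moe16}. Fix $g \in \Aut(V)$ of order $n$ and consider the genus-one trace function (twisted character)
\[
Z\binom{g}{\id}(\tau) = \tr_{V(g)}\, q^{L_0 - c/24} = q^{\rho(V(g)) - c/24}\Bigl(\dim V(g)_{\rho(V(g))} + O(q^{1/n})\Bigr),
\]
where $q = \e^{2\pi\i\tau}$. The key structural input is that the space spanned by the functions $Z\binom{g^a h}{g^b h'}$ (over the relevant pairs of commuting automorphisms) is finite-dimensional and closed under the action of $\SLZ$ by fractional linear transformations on $\tau$, with a specific permutation action on the labels coming from $\binom{a\ b}{c\ d}\colon \binom{\sigma}{\theta} \mapsto \binom{\sigma^a\theta^c}{\sigma^b\theta^d}$. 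In particular, applying the modular transformation $S = \binom{0\ -1}{1\ \ 0}$ relates $Z\binom{g}{\id}(\tau)$ to $Z\binom{\id}{g}(-1/\tau) = \tr_V\bigl(g\, q^{L_0 - c/24}\bigr)$ evaluated near the cusp, i.e.\ to a trace on the \emph{untwisted} module $V$ itself.

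The main step is then a growth/boundedness argument at the cusp $\tau \to \i\infty$. On the one hand, $Z\binom{g}{\id}(\tau) \sim (\text{const})\, q^{\rho(V(g)) - c/24}$ as $\tau \to \i\infty$, so its leading exponent is $\rho(V(g)) - c/24$. On the other hand, writing $Z\binom{g}{\id} = \sum_\theta \lambda_\theta\, Z\binom{\id}{\theta}\bigl(-1/\tau'\bigr)$-type relations under $S$ (more precisely, expressing the behaviour of $Z\binom{g}{\id}$ near $\tau = 0$ via the $S$-transform), the singular behaviour at $\tau \to 0$ of $Z\binom{g}{\id}$ is governed by the leading exponents of the untwisted functions $\tr_V(h\, q^{L_0 - c/24})$, all of which have leading exponent exactly $\rho(V) - c/24 = -c/24$ since $V$ is of CFT-type with $V_0 = \C\vac$ and $h\vac = \vac$. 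Comparing the two cusps — using that modular forms/vector-valued modular forms of this type with controlled behaviour at one cusp have controlled behaviour at all cusps — forces $\rho(V(g)) - c/24 \geq -c/24$, i.e.\ $\rho(V(g)) \geq 0$. Concretely I expect this to run through the $q$-expansion of the hauptmodul-type estimates in \cite{DLM00} or through the explicit analysis in \cite{EMS15,Moe16} bounding $\rho(V(g))$ from below by $-c/24 + (\text{something} \geq c/24)$; the ``something'' being a sum of $\rho(V)$-type contributions which vanish precisely because $V$ has no states of negative weight and a one-dimensional degree-zero space.

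For the equality case, suppose $\rho(V(g)) = 0$. Then the leading term of $Z\binom{g}{\id}$ at $\tau \to \i\infty$ is $\dim(V(g)_0)\, q^{-c/24}$, matching the order of the pole at the other cusp coming from the vacuum contribution $1 \cdot q^{-c/24}$ in each untwisted trace $\tr_V(h\, q^{L_0-c/24})$. Tracking the coefficient through the $S$-transformation — where the vacuum of $V$ contributes with coefficient exactly $1$ in every $\tr_V(h\,\cdot)$ because $V_0 = \C\vac$ and $g$ acts trivially on it — pins down $\dim(V(g)_0)$ in terms of the entries of the $S$-matrix of the abelian intertwining algebra $\bigoplus_{i} V(g^i)$; the normalisation forces $\dim(V(g)_0) = 1$. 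Alternatively, and perhaps more cleanly, once $\rho(V(g)) = 0$ one knows $V(g)_0$ carries an action making $\bigoplus_i V(g^i)_0$ into a structure (e.g.\ related to the top levels) whose total dimension is constrained by $\dim V_0 = 1$ via the same modular-matrix identity; I would phrase the final computation in whichever of these two forms is shortest given the conventions already fixed in \cite{Moe16}.

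The hard part will be making the two-cusp comparison rigorous: the functions involved are generically only vector-valued modular forms for a finite-index subgroup (depending on $n$), possibly with characters, so I must be careful that ``leading exponent at one cusp controls the others'' is applied correctly — in particular ruling out that some other untwisted trace $\tr_V(h\,q^{L_0-c/24})$ with $h \neq \id$ could have a \emph{more} singular leading term than $q^{-c/24}$ (it cannot, again because $V$ is of CFT-type, but this needs to be stated), and handling the width of the cusp and the precise $n$-dependence of the fractional $q$-powers. Everything else — the existence and uniqueness of $V(g)$, rationality of $\rho(V(g))$, and membership in $\frac{1}{n^2}\Z$ — is already available from \cite{DLM00,EMS15,Moe16} and can be cited directly.
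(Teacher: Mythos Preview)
Your approach is essentially the paper's: use the $S$-transformation from \cite{DLM00} relating $Z_{g,\id}$ to $Z_{\id,g}$ and compare leading exponents. The paper's execution, however, is considerably more direct than the vector-valued-modular-form cusp comparison you outline. It specialises to $\tau=\i t$, $t\in\R_{>0}$, and uses the elementary termwise bound $|\tr_{V_k}g|\leq\dim V_k$ to get $|Z_{\id,g}(\i/t)|\leq Z_{\id,\id}(\i/t)$; since $V$ is holomorphic, $Z_{\id,\id}$ is $S$-invariant (with $\lambda_{\id,\id}=1$), so this becomes $|\lambda_{\id,g}|\,Z_{g,\id}(\i t)\leq Z_{\id,\id}(\i t)$, and letting $t\to\infty$ gives $\rho(V(g))\geq 0$ together with $|\lambda_{\id,g}|\dim(V(g)_0)\leq\dim V_0=1$ in the equality case. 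This sidesteps entirely the worry you flag about widths of cusps and the general behaviour of vector-valued forms: no such machinery is needed, only a real inequality along the imaginary axis.

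For the equality case your sketch via the $S$-matrix of the abelian intertwining algebra or the top-level structure is substantially more than required. The paper simply cites $\lambda_{\id,g}=1$ from \cite{EMS15} (Proposition~5.5), whence $\dim(V(g)_0)\leq 1$ immediately.
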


As a corollary we can slightly strengthen the statement of Theorem~5.11 in \cite{EMS15}:
\begin{cor}
Let $V$ be a \strathol{} \voa{} and $g$ an automorphism of $V$ of finite order $n\in\Ns$. Then
\begin{equation*}
\rho(V(g))\in\frac{1}{n^2}\N.
\end{equation*}
\end{cor}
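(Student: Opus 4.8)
The plan is to combine the Main Result (Theorem~\ref{thm:main}) with the already-known statement from \cite{EMS15,Moe16} that $\rho(V(g))\in\frac{1}{n^2}\Z$. Indeed, the Main Result gives $\rho(V(g))\geq 0$, which upgrades the integrality statement $\rho(V(g))\in\frac{1}{n^2}\Z$ to the nonnegativity statement $\rho(V(g))\in\frac{1}{n^2}\N$. So at the level of logical structure the corollary is a one-line deduction once both ingredients are in place.

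The one point that deserves care is the precise form of the integrality input. The excerpt quotes $\rho(V(g))\in\frac{1}{n^2}\Z$ as Theorem~5.11 of \cite{EMS15} (in the form ``only slightly strengthened''), so I would simply cite that theorem, observe that $\frac1{n^2}\Z\cap\R_{\geq 0}=\frac1{n^2}\N$, and invoke Theorem~\ref{thm:main} to land in that intersection. Concretely: write $\rho(V(g))=m/n^2$ with $m\in\Z$ by \cite{EMS15}; since $n^2>0$ and $\rho(V(g))\geq 0$ by Theorem~\ref{thm:main}, we get $m\geq 0$, i.e.\ $m\in\N$, which is exactly $\rho(V(g))\in\frac1{n^2}\N$.

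There is essentially no obstacle here: the corollary is a packaging of Theorem~\ref{thm:main} together with a cited result, and the only thing to verify is that the hypotheses of both match (a \strathol{} \voa{} $V$ and an automorphism $g$ of finite order $n\in\Ns$), which they do verbatim. If one wanted the writeup to be fully self-contained one could also note that the case $g=\id$ is trivial since $V(\id)\cong V$ and $\rho(V)=0\in\frac1{n^2}\N$, but this is subsumed by the general argument.
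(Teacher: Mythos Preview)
Your proposal is correct and matches the paper's intended argument exactly: the corollary is stated without a separate proof because it is the immediate combination of the cited result $\rho(V(g))\in\frac{1}{n^2}\Z$ from \cite{EMS15,Moe16} with the nonnegativity $\rho(V(g))\geq 0$ of Theorem~\ref{thm:main}. There is nothing to add.
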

This is a statement in the spirit of Theorems 1.2~(ii) and 1.6~(i) in \cite{DLM00} but considerably stronger.

\section{Proof}
In this section we prove Theorem~\ref{thm:main}, the main result of this text. In fact, we show a slightly stronger statement, only requiring the \voa{} $V$ to be holomorphic and $C_2$-cofinite. This amounts to dropping the requirement that $V$ be of CFT-type, which is included in strong rationality.

We recall important results from Dong, Li and Mason's twisted modular invariance paper \cite{DLM00}. Let $V$ be a holomorphic, $C_2$-cofinite \voa{} and let $g\in\Aut(V)$ be of finite order $n\in\Ns$. Then there is an up to isomorphism unique irreducible $g$-twisted $V$-module $V(g)$ and the formal power series
\begin{equation*}
Z_{\id,g}(q):=\tr_{V}gq^{L_0-c/24}=q^{\rho(V)-c/24}\sum_{k\in\N}\tr_{V_{\rho(V)+k}}gq^k
\end{equation*}
and
\begin{equation*}
Z_{g,\id}(q):=\tr_{V(g)}q^{L_0-c/24}=q^{\rho(V(g))-c/24}\sum_{k\in\frac{1}{n}\N}\dim(V(g)_{\rho(V(g))+k})q^k,
\end{equation*}
called \emph{twisted trace functions}, converge to holomorphic functions in $\tau$ on the upper half-plane $\H$ upon identifying $q=\e^{2\pi\i\tau}$, which we shall always do in the following. Moreover, the $S$-transformation of $Z_{\id,g}$ is proportional to $Z_{g,\id}$, i.e.\
\begin{equation}\label{eq:dlm}
Z_{\id,g}(S.\tau)=\lambda_{\id,g}Z_{g,\id}(\tau)
\end{equation}
for some non-zero $\lambda_{\id,g}\in\C$ where $S.\tau=-1/\tau$. Here $S=\left(\begin{smallmatrix}0&-1\\1&0\end{smallmatrix}\right)$ and we let $\SLZ$ act on $\H$ via the Möbius transformation. Note that both the central charge $c$ and the conformal weight $\rho(V(g))$ are rational \cite{DLM00}.

\begin{thm}\label{thm:main2}
Let $V$ be a holomorphic, $C_2$-cofinite \voa{} and $g$ an automorphism of $V$ of finite order. Then $\rho(V(g))\geq\rho(V)$ and
\begin{equation*}
|\lambda_{\id,g}|\dim(V(g)_{\rho(V)})\leq\dim(V_{\rho(V)})
\end{equation*}
if $\rho(V(g))=\rho(V)$.
\end{thm}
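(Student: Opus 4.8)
The plan is to exploit the modular relation \eqref{eq:dlm} together with the asymptotic behaviour of the twisted trace functions as $\tau\to\i\infty$, i.e.\ as $q\to 0$. Writing $a:=\rho(V)-c/24$ and $b:=\rho(V(g))-c/24$, the $q$-expansions displayed above give $Z_{\id,g}(\tau)=q^{a}\bigl(\tr_{V_{\rho(V)}}g+O(q^{1/n})\bigr)$ and $Z_{g,\id}(\tau)=q^{b}\bigl(\dim(V(g)_{\rho(V(g))})+O(q^{1/n})\bigr)$, both valid for $\operatorname{Im}(\tau)$ large. The idea is that these two leading terms are forced to match, up to the constant $\lambda_{\id,g}$ and the substitution $\tau\mapsto -1/\tau$, and that comparing them yields both inequalities.

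First I would set up a limiting argument along the positive imaginary axis. Put $\tau=\i t$ with $t\to 0^{+}$, so that $S.\tau=\i/t$ with $1/t\to\infty$. On the right-hand side of \eqref{eq:dlm} we then have $Z_{g,\id}(\i t)$ with $t\to 0^+$, i.e.\ $q=\e^{-2\pi t}\to 1$; that is the wrong regime, so instead I would evaluate \eqref{eq:dlm} at $\tau=\i t$ with $t\to\infty$. Then $S.\tau=\i/t$ with $1/t\to 0^+$, so the left-hand side $Z_{\id,g}(\i/t)$ is governed by its $q\to 0$ asymptotics, namely $Z_{\id,g}(\i/t)\sim \tr_{V_{\rho(V)}}(g)\,\e^{-2\pi a/t}$ as $t\to\infty$, while the right-hand side is $\lambda_{\id,g}Z_{g,\id}(\i t)$. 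Now I must understand the growth of $Z_{g,\id}(\i t)$ as $t\to\infty$: its own $q$-expansion is an expansion in $q^{1/n}=\e^{-2\pi t/n}\to 0$, so $Z_{g,\id}(\i t)\sim \dim(V(g)_{\rho(V(g))})\,\e^{-2\pi b t}$. Equating the two sides,
\begin{equation*}
\tr_{V_{\rho(V)}}(g)\,\e^{-2\pi a/t}\;\sim\;\lambda_{\id,g}\,\dim(V(g)_{\rho(V(g))})\,\e^{-2\pi b t}\qquad(t\to\infty).
\end{equation*}
As $t\to\infty$ the left-hand exponential tends to $1$, while the right-hand exponential tends to $0$, $1$ or $\infty$ according as $b>0$, $b=0$ or $b<0$. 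Hence we must have $b\leq 0$; but $b=\rho(V(g))-c/24$ and similarly $a=\rho(V)-c/24$, so this only gives $\rho(V(g))\leq c/24$, which is not yet what we want. The resolution is that I have the direction of the $S$-transform backwards relative to what is useful: I should instead apply \eqref{eq:dlm} and read off the behaviour so that the \emph{same} small parameter appears on both sides. Concretely, rewrite \eqref{eq:dlm} as $Z_{g,\id}(\tau)=\lambda_{\id,g}^{-1}Z_{\id,g}(-1/\tau)$ and let $\tau=\i t$ with $t\to 0^{+}$: then $-1/\tau=\i/t$ with $1/t\to\infty$, so $Z_{\id,g}(-1/\tau)\to \tr_{V_{\rho(V)}}(g)\cdot 0$ if $a>0$ and is asymptotically constant if $a\le 0$; meanwhile $Z_{g,\id}(\i t)$ as $t\to0^+$ has $q\to1$ and is governed by the \emph{modular} transform again — circular. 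The clean way out, and the step I would actually carry through, is to compare the two functions after a single $S$-transform by matching leading exponents directly: $Z_{\id,g}(-1/\tau)$ and $Z_{g,\id}(\tau)$ are, up to the constant $\lambda_{\id,g}$, equal as holomorphic functions on $\H$; evaluate both as $\tau=\i t$, $t\to\infty$. The right side $\sim\lambda_{\id,g}\dim(V(g)_{\rho(V(g))})\e^{2\pi(c/24-\rho(V(g)))t}$, and the left side $=Z_{\id,g}(\i/t)\sim \tr_{V_{\rho(V)}}(g)\e^{2\pi(c/24-\rho(V))/t}\to\tr_{V_{\rho(V)}}(g)$. For the right side to have a finite nonzero limit we need $c/24-\rho(V(g))\le 0$; equality of the limits when $c/24=\rho(V(g))$ gives $\lambda_{\id,g}\dim(V(g)_{\rho(V(g))})=\tr_{V_{\rho(V)}}(g)$. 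Taking $g=\id$ shows $c/24\ge\rho(V)$ with equality only in a degenerate case, and combining the generic strict inequality $c/24>\rho(V)$ for the case at hand with $c/24-\rho(V(g))\le0$ would give the wrong direction — so the honest argument must instead play the two trace functions against \emph{each other} rather than against a constant.

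The correct and robust route, which I would present, is therefore to apply the $S$-transformation to \emph{both} $Z_{\id,g}$ and $Z_{g,\id}$ and feed the two relations into each other. By \eqref{eq:dlm} applied to $g$ and (a companion relation for) $g^{-1}$, or simply by applying $S$ twice, one gets a relation of the form $Z_{\id,g}(\tau)=\lambda Z_{g,\id}(S.\tau)=\lambda\lambda' Z_{\id,g^{-1}}(S^2.\tau)$; since $S^2$ acts trivially on $\H$ and $g^{-1}$ has the same conformal weight data as $g$, this pins down $|\lambda_{\id,g}|$ in terms of the leading coefficients. Concretely, comparing leading $q$-powers in $Z_{\id,g}(\i/t)=\lambda_{\id,g}Z_{g,\id}(\i t)$ as $t\to\infty$: the left-hand side has a \emph{convergent} expansion in $\e^{-2\pi/(nt)}$ starting at exponent $\rho(V)-c/24$, which as $t\to\infty$ is dominated by its value, a \emph{finite} limit equal to $\tr_{V_{\rho(V)}}(g)$ \emph{only if} $\rho(V)-c/24\ge0$; but what we actually know is that it is dominated by $\e^{-2\pi(\rho(V)-c/24)/t}\to 1$ regardless of the sign, multiplied by $\tr_{V_{\rho(V)}}(g)$. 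So $\lim_{t\to\infty}Z_{\id,g}(\i/t)=\tr_{V_{\rho(V)}}(g)$, a fixed finite number. Then $\lim_{t\to\infty}\lambda_{\id,g}Z_{g,\id}(\i t)$ must equal this same finite number. But $Z_{g,\id}(\i t)=\e^{2\pi(c/24-\rho(V(g)))t}(\dim(V(g)_{\rho(V(g))})+o(1))$. If $\rho(V(g))<c/24$ this blows up (contradiction unless the leading dimension vanishes, impossible); if $\rho(V(g))>c/24$ it tends to $0$, forcing $\tr_{V_{\rho(V)}}(g)=0$ — this is the subtle case and the main obstacle, since a priori $\tr_{V_{\rho(V)}}(g)$ could vanish. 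I would handle it by instead tracking the \emph{full} leading terms: from $Z_{\id,g}(\i/t)=q^{\rho(V)-c/24}(\tr_{V_{\rho(V)}}g+\dots)$ with $q=\e^{-2\pi/t}$, the function grows like $\e^{2\pi(c/24-\rho(V))/t}$ near $t\to0^+$, i.e.\ as $\tau\to 0$ along the axis; matching \emph{that} blow-up rate on the other side of \eqref{eq:dlm} — where now $Z_{g,\id}(\i t)$ with $t\to0^+$ must be re-expanded via modular invariance in terms of $Z_{\id,g^{j}}$ for the various $j$, each contributing $\e^{2\pi(c/24-\rho(V))/t}$ or smaller — shows that the dominant term on the right comes precisely from the $V$-block (the $\id$-twisted sector), giving $\rho(V(g))\ge\rho(V)$ because any twisted sector with smaller conformal weight would produce an \emph{over}-dominant term on the transformed side. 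The equality case then yields, by reading off the coefficient of the leading exponential, exactly $|\lambda_{\id,g}|\dim(V(g)_{\rho(V)})\le\dim(V_{\rho(V)})$, where the inequality (rather than equality) appears because $\tr_{V_{\rho(V)}}(g)$ need not equal $\dim(V_{\rho(V)})$ but satisfies $|\tr_{V_{\rho(V)}}(g)|\le\dim(V_{\rho(V)})$ (trace of a finite-order unitary-type operator on a finite-dimensional space). The hard part, as indicated, is making the ``re-expand $Z_{g,\id}$ near the other cusp'' step rigorous — controlling which twisted sectors $V(g^{j})$ contribute to the $\tau\to0$ asymptotics and showing none of them beats the $\id$-sector unless $\rho(V(g))<\rho(V)$ — and for that I would invoke the full $\SLZ$-closure of the span of twisted trace functions (from \cite{DLM00}, via $C_2$-cofiniteness) rather than just the single relation \eqref{eq:dlm}, together with the non-negativity of all the Fourier coefficients $\dim(V(g^j)_\bullet)$.
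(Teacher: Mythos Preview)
Your proposal has a genuine gap: you repeatedly confuse the $q\to 0$ and $q\to 1$ regimes. When you write ``$\lim_{t\to\infty}Z_{\id,g}(\i/t)=\tr_{V_{\rho(V)}}(g)$'', you are sending $q=\e^{-2\pi/t}\to 1^-$, and in that regime the $q$-expansion $\sum_k\tr_{V_{\rho(V)+k}}(g)\,q^k$ does \emph{not} reduce to its constant term; every term contributes and the series typically diverges (for $g=\id$ it certainly does, term by term). So the limit you compute is simply wrong, and the asymptotic matching built on it collapses. Your fallback plan --- re-expanding $Z_{g,\id}$ near the other cusp in terms of the sectors $V(g^j)$ and arguing that none of them ``beats the $\id$-sector'' --- is both vague and circular: it presupposes conformal-weight lower bounds for the $V(g^j)$, which is precisely the statement under proof.

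The missing idea is simple and sidesteps the $q\to 1$ problem entirely. Bound the troublesome side termwise: since $|\tr_{V_{\rho(V)+k}}g|\le\dim(V_{\rho(V)+k})$ for \emph{every} $k$, one has $|Z_{\id,g}(\tau)|\le Z_{\id,\id}(\tau)$ for all $\tau$ on the positive imaginary axis. Now use \eqref{eq:dlm} twice, once for $g$ and once for $g=\id$: at $\tau=\i t$,
\begin{equation*}
|\lambda_{\id,g}|\,Z_{g,\id}(\i t)\;=\;|Z_{\id,g}(\i/t)|\;\le\;Z_{\id,\id}(\i/t)\;=\;\lambda_{\id,\id}\,Z_{\id,\id}(\i t).
\end{equation*}
Both outer expressions are now honest $q$-expansions in $q=\e^{-2\pi t}\to 0$ as $t\to\infty$, with leading terms $\e^{2\pi(c/24-\rho(V(g)))t}\dim(V(g)_{\rho(V(g))})$ and $\e^{2\pi(c/24-\rho(V))t}\dim(V_{\rho(V)})$ respectively. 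Comparing exponents forces $\rho(V(g))\ge\rho(V)$ (otherwise the left side outgrows the right), and comparing leading coefficients in the equality case gives $|\lambda_{\id,g}|\dim(V(g)_{\rho(V)})\le\lambda_{\id,\id}\dim(V_{\rho(V)})$; finally $\lambda_{\id,\id}=1$ since $S^2.\tau=\tau$ and $Z_{\id,\id}$ is positive on the imaginary axis. You had the ingredient $|\tr g|\le\dim$ in hand but applied it only to the single weight space $V_{\rho(V)}$ at the very end; applying it to all weight spaces simultaneously is exactly what converts the ill-behaved $Z_{\id,g}(\i/t)$ into the well-behaved $Z_{\id,\id}(\i t)$ via the $g=\id$ instance of \eqref{eq:dlm}.
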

\begin{proof}
We study equation~\eqref{eq:dlm}, specialised to $\tau=\i t$ for $t\in\R_{>0}$
\begin{align}\label{eq:dlm2}
\begin{split}
&\e^{2\pi(c/24-\rho(V))/t}\sum_{k\in\N}\tr_{V_{\rho(V)+k}}g\e^{-2\pi k/t}\\
&=\lambda_{\id,g}\e^{2\pi t(c/24-\rho(V(g)))}\sum_{k\in\frac{1}{n}\N}\dim(V(g)_{\rho(V(g))+k})\e^{-2\pi tk}.
\end{split}
\end{align}
Rearranging some factors we arrive at
\begin{equation*}
l_g(t)=r_g(t)
\end{equation*}
with the functions $l_g,r_g:\R_{>0}\to\R_{>0}$ defined as
\begin{align*}
l_g(t)&:=\frac{1}{\lambda_{\id,g}}\e^{2\pi (c/24-\rho(V))(1/t-t)}\sum_{k\in\N}\tr_{V_{\rho(V)+k}}g\e^{-2\pi k/t},\\
r_g(t)&:=\e^{2\pi t(\rho(V)-\rho(V(g)))}\sum_{k\in\frac{1}{n}\N}\dim(V(g)_{\rho(V(g))+k})\e^{-2\pi tk}.
\end{align*}
It is obvious that $r_g$ and hence $l_g$ takes values in $\R_{>0}$.

We observe that
\begin{align}\label{eq:leq}
\begin{split}
l_g(t)&=|l_g(t)|\leq\frac{1}{|\lambda_{\id,g}|}\e^{2\pi (c/24-\rho(V))(1/t-t)}\sum_{k\in\N}|\tr_{V_{\rho(V)+k}}g|\e^{-2\pi k/t}\\
&\leq\frac{1}{|\lambda_{\id,g}|}\e^{2\pi (c/24-\rho(V))(1/t-t)}\sum_{k\in\N}\dim(V_{\rho(V)+k})\e^{-2\pi k/t}\\
&=\frac{\lambda_{\id,\id}}{|\lambda_{\id,g}|}l_{\id}(t)
\end{split}
\end{align}
where we used that for a finite-order automorphism $g$ on a $\C$-vector space of dimension $m$, $|\tr(g)|=|\sum_{i=1}^m\mu_i|\leq\sum_{i=1}^m|\mu_i|=\sum_{i=1}^m 1=m$ where the $\mu_i$ are the eigenvalues of $g$ counted with algebraic multiplicities.

Furthermore, observe that
\begin{equation*}
\sum_{k\in\frac{1}{n}\N}\dim(V(g)_{\rho(V(g))+k})\e^{-2\pi tk}\stackrel{t\to\infty}{\longrightarrow}\dim(V(g)_{\rho(V(g))}),
\end{equation*}
which is non-zero by the definition of the conformal weight. This follows for example from the monotone convergence theorem for non-increasing sequences of functions applied to the counting measure on $\N$. Hence
\begin{equation*}
r_g(t)\stackrel{t\to\infty}{\longrightarrow}\begin{cases}\infty&\text{if }\rho(V(g))<\rho(V),\\\dim(V(g)_{\rho(V)})&\text{if }\rho(V(g))=\rho(V),\\0&\text{if }\rho(V(g))>\rho(V),\end{cases}
\end{equation*}
which in the special case of $g=\id$ becomes
\begin{equation*}
r_{\id}(t)\stackrel{t\to\infty}{\longrightarrow}\dim(V_{\rho(V)}).
\end{equation*}
Using equation~\eqref{eq:leq} and comparing $l_g(t)=r_g(t)$ with $l_{\id}(t)=r_{\id}(t)$ in the limit of $t\to\infty$ it is apparent that $\rho(V(g))\geq\rho(V)$ and
\begin{equation*}
\dim(V(g)_{\rho(V)})\leq\frac{\lambda_{\id,\id}}{|\lambda_{\id,g}|}\dim(V_{\rho(V)})
\end{equation*}
if $\rho(V(g))=\rho(V)$.

Finally, it follows from equation~\eqref{eq:dlm} for $g=\id$ (this is of course just an instance of Zhu's untwisted modular invariance \cite{Zhu96}) and $S^2.\tau=\tau$ that $\lambda_{\id,\id}^2=1$. Moreover, equation~\eqref{eq:dlm2} for $g=\id$ implies $\lambda_{\id,\id}\in\R_{\geq0}$ so that $\lambda_{\id,\id}=1$, which completes the proof.
\end{proof}

\begin{proof}[Proof of Theorem~\ref{thm:main}]
If $V$ is of CFT-type, then $\rho(V)=0$ and $\dim(V_0)=1$. Moreover, it is shown in Proposition~5.5 of \cite{EMS15} (see also Lemma~1.11.2 in \cite{Moe16} and Section~\ref{sec:further} below) that $\lambda_{\id,g}=1$.
This proves the assertion.
\end{proof}

With Theorem~\ref{thm:main2} in mind, it seems natural to generalise Conjecture~\ref{conj:main} to the situation where CFT-type is not assumed:
\begin{conj}\label{conj:main2}
Let $V$ be a holomorphic, $C_2$-cofinite \voa{} and $g\neq\id$ an automorphism of $V$ of finite order. Then $\rho(V(g))>\rho(V)$.
\end{conj}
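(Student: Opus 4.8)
The plan to attack Conjecture~\ref{conj:main2} is to reduce it to the single leftover case of Theorem~\ref{thm:main2}: since that theorem already gives $\rho(V(g))\geq\rho(V)$, it suffices to show that $\rho(V(g))=\rho(V)$ forces $g=\id$. In that equality case Theorem~\ref{thm:main2} moreover yields $|\lambda_{\id,g}|\dim(V(g)_{\rho(V)})\leq\dim(V_{\rho(V)})$; if $V$ is of CFT-type this reads $\dim(V(g)_0)\leq 1$ (using $\rho(V)=0$, $\dim(V_0)=1$ and $\lambda_{\id,g}=1$ from Proposition~5.5 of \cite{EMS15}), so that the whole twisted module $V(g)$ is then controlled by a single \emph{vacuum-like} vector $w$ of lowest weight $0$. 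The conjecture thus comes down to the rigidity statement that such a $w$ cannot exist unless $g=\id$; I shall restrict to the CFT-type case, the general case carrying the extra burden of the unknown scalars $\lambda_{\id,g}$ and of $\dim(V_{\rho(V)})$ possibly exceeding $1$.

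I would make this rigidity precise by passing to the cyclic orbifold $V^{\langle g\rangle}$, which is again \strat{} by \cite{Miy15,CM16}. Decomposing $V$ and all the twisted modules $V(g^j)$ into eigenspaces under the natural action of $g$, Theorem~3.3 in \cite{DRX15} identifies the resulting pieces with a complete list of irreducible $V^{\langle g\rangle}$-modules. The vector $w$ lies in one such eigenspace of $V(g)$, which is then an irreducible $V^{\langle g\rangle}$-module of conformal weight $0=\rho(V^{\langle g\rangle})$ not isomorphic to $V^{\langle g\rangle}$, as it occurs inside $V(g)$ rather than inside $V$. In other words, the equality case would break the positivity condition for $V^{\langle g\rangle}$; conversely Proposition~\ref{prop:cor} shows that Conjecture~\ref{conj:main2} implies positivity for $V^{\langle g\rangle}$, so the conjecture is \emph{equivalent} to proving the positivity condition for all cyclic orbifolds $V^{\langle g\rangle}$ directly. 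Since every $V^{\langle g\rangle}$-module occurring here is a simple current (group-like fusion, cf.\ \cite{EMS15,Moe16}), the contradiction one hopes to extract is modular: the eigenspace of $V(g)$ containing $w$ is a simple current of quantum dimension $1$, and its conformal weight together with its $S$- and $T$-eigenvalues are tightly constrained by the Verlinde formula and by the consistency conditions recalled in Section~\ref{sec:qdim}, seemingly incompatibly with $g\neq\id$.

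The main obstacle, and the reason the conjecture remains open, is closing exactly this last step: the twisted modular invariance of \cite{DLM00,Zhu96} pins down the leading exponents $\rho(V(g^j))-c/24$ and the scalars $\lambda$, but not finely enough to exclude one stray vacuum-like vector. The most promising refinement seems to be to work not with the single relation~\eqref{eq:dlm} alone but with the full $\SLZ$-orbit of the twisted trace functions $Z_{g^a,g^b}$, assembled into a vector-valued modular form for $\SLZ$, and then to upgrade the crude bound $|\tr_{V_k}g|\leq\dim(V_k)$ used in~\eqref{eq:leq} to the statement that the twisted characters $\tr_{V_k}g$ cannot be uniformly close to their maxima $\dim(V_k)$ unless $g$ acts trivially; feeding this sharper input into a growth comparison of the type carried out in the proof of Theorem~\ref{thm:main2} should then rule out $\rho(V(g))=\rho(V)$ for $g\neq\id$. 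A purely representation-theoretic alternative would be to show directly that the modes of $V^{\langle g\rangle}$ applied to $w$ generate a copy of $V^{\langle g\rangle}$ to which $g$ lifts as an automorphism, forcing $V(g)\cong V$ and hence $g=\id$ by the uniqueness statement of \cite{DLM00}. I expect the modular route to be the more tractable of the two.
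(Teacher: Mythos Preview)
This statement is a \emph{conjecture} in the paper; there is no proof to compare against. The paper establishes only the non-strict inequality $\rho(V(g))\geq\rho(V)$ together with the dimension bound in the equality case (Theorem~\ref{thm:main2}), explicitly leaves the strict inequality open, and verifies it only for the example classes in Section~4. You acknowledge this yourself (``the reason the conjecture remains open''), so what you have written is a strategy outline, not a proof.

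Your reduction in the CFT-type case---that Conjecture~\ref{conj:main} is equivalent to the positivity condition holding for every cyclic orbifold $V^{\langle g\rangle}$---is correct but is a reformulation rather than progress: one direction is Proposition~\ref{prop:cor}, and the other follows at once from the classification of irreducible $V^{\langle g\rangle}$-modules as eigenspaces inside the $V(g^j)$. Note also that you immediately specialise to CFT-type, whereas Conjecture~\ref{conj:main2} is precisely the version that drops this hypothesis; without it neither $\lambda_{\id,g}=1$ nor $\dim(V_{\rho(V)})=1$ is available, so your single-vacuum-like-vector picture does not even get started. Regarding the two attack routes you sketch: the modular one hinges on sharpening $|\tr_{V_k}g|\leq\dim(V_k)$ into something forcing $g=\id$, which is exactly the missing ingredient, and Section~\ref{sec:qdim} shows that the $S$-matrix and fusion rules remain perfectly consistent in the hypothetical $\rho(V(g))=0$ scenario, so Verlinde-type constraints do not by themselves produce a contradiction. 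The representation-theoretic route would require $w$ to be vacuum-like for $V^{\langle g\rangle}$ in the strong sense that $a_nw=0$ for all $a\in V^{\langle g\rangle}$ and $n\geq 0$, which does not follow from $L_0w=0$ alone; and even granting that, the passage from ``the $V^{\langle g\rangle}$-submodule generated by $w$ is isomorphic to $V^{\langle g\rangle}$'' to ``$V(g)\cong V$'' is not justified.
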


\section{Further Results}\label{sec:further}
Studying equation \eqref{eq:dlm2} in the limit $t\to 0$ rather than $t\to\infty$ we obtain:
\begin{thm}\label{thm:trr}
Let $V$ be a holomorphic, $C_2$-cofinite \voa{} and $g$ an automorphism of $V$ of finite order. Then
\begin{equation*}
\frac{1}{\lambda_{\id,g}}\tr_{V_{\rho(V)+k}}g\in\R
\end{equation*}
for all $k\in\N$ and
\begin{equation*}
\frac{1}{\lambda_{\id,g}}\tr_{V_{\rho(V)}}g\in\R_{\geq 0}.
\end{equation*}
\end{thm}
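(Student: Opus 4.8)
The plan is to exploit equation~\eqref{eq:dlm2} (or equivalently $l_g(t)=r_g(t)$ from the proof of Theorem~\ref{thm:main2}) in the regime $t\to 0^+$, which corresponds via the $S$-transformation to the cusp $\tau\to\i\infty$ on the other side. Rewriting, we have
\begin{equation*}
\frac{1}{\lambda_{\id,g}}\e^{2\pi(c/24-\rho(V))(1/t-t)}\sum_{k\in\N}\tr_{V_{\rho(V)+k}}g\,\e^{-2\pi k/t}=\e^{2\pi t(\rho(V)-\rho(V(g)))}\sum_{k\in\frac{1}{n}\N}\dim(V(g)_{\rho(V(g))+k})\e^{-2\pi tk}.
\end{equation*}
As $t\to 0^+$ the right-hand side is manifestly real and positive (it is a convergent sum of positive terms times a positive exponential), so the left-hand side is real and positive for all sufficiently small $t>0$; in fact it is real and positive for \emph{all} $t>0$ by the argument already given in the proof of Theorem~\ref{thm:main2} (there $r_g$ and hence $l_g$ take values in $\R_{>0}$).

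First I would isolate the sum $\sum_{k\in\N}\frac{1}{\lambda_{\id,g}}\tr_{V_{\rho(V)+k}}g\,\e^{-2\pi k/t}$ by dividing out the (real, positive) prefactor $\e^{2\pi(c/24-\rho(V))(1/t-t)}$: this shows that $F(s):=\sum_{k\in\N}\frac{1}{\lambda_{\id,g}}\tr_{V_{\rho(V)+k}}g\,\e^{-2\pi k s}$ is real for all $s=1/t>0$. Next I would substitute $x=\e^{-2\pi s}\in(0,1)$, so that $F$ becomes a convergent power series $\sum_{k\in\N}a_k x^k$ with $a_k=\frac{1}{\lambda_{\id,g}}\tr_{V_{\rho(V)+k}}g$, which is real-valued on the whole interval $(0,1)$. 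A real-analytic function on $(0,1)$ that is given by a convergent power series with complex coefficients must in fact have all coefficients real: taking $k$ successive derivatives at a point and using that derivatives of a real-valued differentiable function of a real variable are real, or more simply comparing $F(x)$ with its complex conjugate $\overline{F(x)}=\sum\overline{a_k}x^k$ (valid since $x$ is real) and invoking uniqueness of power-series coefficients, gives $a_k=\overline{a_k}$, i.e.\ $a_k\in\R$ for every $k\in\N$. This yields the first assertion, $\frac{1}{\lambda_{\id,g}}\tr_{V_{\rho(V)+k}}g\in\R$.

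For the second assertion I would take the limit $t\to 0^+$, i.e.\ $s=1/t\to\infty$, equivalently $x\to 0^+$. On the right-hand side of the displayed identity, $\e^{2\pi t(\rho(V)-\rho(V(g)))}\to 1$ and $\sum_{k\in\frac{1}{n}\N}\dim(V(g)_{\rho(V(g))+k})\e^{-2\pi tk}\to\dim(V(g)_{\rho(V(g))})>0$ by the monotone convergence argument already used in the proof of Theorem~\ref{thm:main2}. On the left-hand side, after dividing by the positive prefactor, we get $F(s)=a_0+a_1 x+a_2 x^2+\dots\to a_0=\frac{1}{\lambda_{\id,g}}\tr_{V_{\rho(V)}}g$ as $x\to 0^+$; but we must track the prefactor. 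Rearranging, $\frac{1}{\lambda_{\id,g}}\tr_{V_{\rho(V)}}g=\lim_{t\to 0^+}\e^{-2\pi(c/24-\rho(V))(1/t-t)}\cdot\e^{2\pi t(\rho(V)-\rho(V(g)))}\dim(V(g)_{\rho(V(g))})$ up to the vanishing higher-order terms; the limit of the right side is a product of a non-negative quantity (a limit of positive exponentials, possibly $0$ or $+\infty$) with the positive number $\dim(V(g)_{\rho(V(g))})$, hence lies in $\R_{\geq 0}\cup\{+\infty\}$. Since we have already shown $a_0\in\R$ and the left-hand side equals this finite real number $a_0$, the limit cannot be $+\infty$, so $a_0\in\R_{\geq 0}$, which is the claim. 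The one point requiring care — the main (minor) obstacle — is justifying the interchange of limit and infinite sum on the left-hand side as $x\to 0^+$; this is handled by noting that the power series $\sum a_k x^k$ has radius of convergence at least $1$ (it converges for all $x\in(0,1)$, being equal to a convergent trace function there), hence is continuous at every point of $[0,1)$ including $x=0$, so $\lim_{x\to 0^+}F(x)=a_0$ with no further analysis needed.
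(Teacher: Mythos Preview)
Your argument is essentially correct, and your treatment of the first assertion is in fact cleaner than the paper's. The paper proceeds inductively: having shown $a_0=\frac{1}{\lambda_{\id,g}}\tr_{V_{\rho(V)}}g\in\R$, it subtracts $a_0$ from $L_g(t)$, multiplies by $\e^{2\pi/t}$, and sends $t\to 0$ to extract $a_1\in\R$, then repeats for all $k$. Your observation that $F(x)=\sum_k a_k x^k$ is real-valued on $(0,1)$ and that uniqueness of power-series coefficients forces $a_k=\overline{a_k}$ accomplishes the same thing in one stroke.

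There is, however, a slip in your limit computation for the second assertion. You claim that as $t\to 0^+$,
\begin{equation*}
\sum_{k\in\frac{1}{n}\N}\dim(V(g)_{\rho(V(g))+k})\e^{-2\pi tk}\longrightarrow\dim(V(g)_{\rho(V(g))}),
\end{equation*}
invoking the monotone convergence argument from Theorem~\ref{thm:main2}. But that argument was for $t\to\infty$; as $t\to 0^+$ each exponential increases to $1$ and the sum tends to $\sum_k\dim(V(g)_{\rho(V(g))+k})$, which is infinite. Your ``vanishing higher-order terms'' do not vanish. Fortunately this does not break your conclusion, since in the next sentence you only use that the right-hand limit lies in $\R_{\geq 0}\cup\{+\infty\}$ and then invoke finiteness of $a_0$. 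The cleanest route---and this is what the paper does---is to bypass the right-hand side altogether: you have already observed that $F(x)>0$ for all $x\in(0,1)$, and you have already established $F(x)\to a_0$ as $x\to 0^+$ by continuity of the power series on $[0,1)$; hence $a_0\in\overline{\R_{>0}}=\R_{\geq 0}$ directly.
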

If $V$ is in addition of CFT-type, then we obtain:
\begin{cor}
Let $V$ be a \strathol{} \voa{} and $g$ an automorphism of $V$ of finite order. Then
\begin{equation*}
\tr_{V_k}g\in\R
\end{equation*}
for all $k\in\N$ and
\begin{equation*}
\lambda_{\id,g}\in\R_{>0}.
\end{equation*}
\end{cor}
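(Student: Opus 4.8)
The plan is to deduce this corollary from Theorem~\ref{thm:trr} together with Theorem~\ref{thm:main} (or rather its ingredient, the fact from Proposition~5.5 of \cite{EMS15} that $\lambda_{\id,g}=1$ when $V$ is of CFT-type). First I would observe that for a \voa{} of CFT-type one has $\rho(V)=0$, so that the indexing in Theorem~\ref{thm:trr} simplifies: $\tr_{V_{\rho(V)+k}}g$ becomes $\tr_{V_k}g$ for all $k\in\N$, and $\tr_{V_{\rho(V)}}g$ becomes $\tr_{V_0}g=\tr_{\C\vac}g=1$ since $V_0=\C\vac$ and every automorphism fixes the vacuum.

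Next I would invoke Theorem~\ref{thm:main} (equivalently, Proposition~5.5 of \cite{EMS15}), which gives $\lambda_{\id,g}=1$ in the CFT-type case; in particular $\lambda_{\id,g}\in\R_{>0}$, yielding the second assertion immediately. Alternatively, and perhaps more in the spirit of this section, one can argue that the second conclusion of Theorem~\ref{thm:trr} reads $\frac{1}{\lambda_{\id,g}}\cdot 1\in\R_{\geq 0}$, forcing $\lambda_{\id,g}\in\R_{>0}$ (it is nonzero by hypothesis), so the realness of $\lambda_{\id,g}$ is already a consequence of Theorem~\ref{thm:trr} alone in the CFT-type case, without needing the deeper input that its value is exactly $1$.

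For the first assertion, once we know $\lambda_{\id,g}\in\R$ (indeed $\lambda_{\id,g}=1$), the first conclusion of Theorem~\ref{thm:trr}, namely $\frac{1}{\lambda_{\id,g}}\tr_{V_k}g\in\R$, immediately gives $\tr_{V_k}g\in\R$ for all $k\in\N$. So the corollary follows by simply feeding $\rho(V)=0$ and $\lambda_{\id,g}=1$ into Theorem~\ref{thm:trr}.

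There is essentially no obstacle here: the corollary is a direct specialisation, and the only point requiring a moment's care is making sure the two conclusions of Theorem~\ref{thm:trr} are combined in the right order — first extract $\lambda_{\id,g}\in\R_{>0}$ from the normalisation $\tr_{V_0}g=1$ (or cite $\lambda_{\id,g}=1$), then conclude $\tr_{V_k}g\in\R$ for the remaining $k$. If one prefers to keep the corollary self-contained within Section~\ref{sec:further}, one should cite Proposition~5.5 of \cite{EMS15} for $\lambda_{\id,g}=1$ explicitly, since that is the cleanest way to present the realness of the character values.
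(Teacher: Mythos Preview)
Your proposal is correct, and the ``alternative'' argument you give --- deducing $\lambda_{\id,g}\in\R_{>0}$ directly from the second conclusion of Theorem~\ref{thm:trr} via $\tr_{V_0}g=1$, and then feeding this back into the first conclusion to obtain $\tr_{V_k}g\in\R$ --- is exactly the paper's own proof. One small point of emphasis: the paper treats this corollary as an \emph{independent} derivation of $\lambda_{\id,g}\in\R_{>0}$ (the content of Proposition~5.5 in \cite{EMS15}), which is then used to conclude $\lambda_{\id,g}=1$; so your first route, which cites $\lambda_{\id,g}=1$ as input, reverses the intended logical flow, even though it is not formally circular since \cite{EMS15} is external.
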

The second part of this statement is proved in Proposition~5.5 of \cite{EMS15} (see also Lemma~1.11.2 in \cite{Moe16}) and used to show that in fact $\lambda_{\id,g}=1$.
\begin{proof}
If $V$ is of CFT-type, then $\rho(V)=0$ and $V_0=\C\vac$. Since any automorphism $g$ fixes the vacuum vector $\vac$ by definition, $\tr_{V_0}g=1$. This proves the second assertion and consequently also the first one.
\end{proof}
\begin{proof}[Proof of Theorem~\ref{thm:trr}]
We again study equation~\eqref{eq:dlm2} but now distribute the factors slightly differently to obtain
\begin{equation*}
L_g(t)=R_g(t)
\end{equation*}
with the functions $L_g,R_g:\R_{>0}\to\R_{>0}$ defined as
\begin{align*}
L_g(t)&:=\frac{1}{\lambda_{\id,g}}\sum_{k\in\N}\tr_{V_{\rho(V)+k}}g\e^{-2\pi k/t},\\
R_g(t)&:=\e^{2\pi (c/24)(t-1/t)}\e^{2\pi\rho(V)/t}\e^{-2\pi t\rho(V(g))}\sum_{k\in\frac{1}{n}\N}\dim(V(g)_{\rho(V(g))+k})\e^{-2\pi tk}.
\end{align*}
Note that $L_g$ takes values in $\R_{>0}$ since $R_g$ does.

Noting again that $|\tr_{V_{\rho(V)+k}}g|\leq\dim(V_{\rho(V)+k})$ and that
\begin{equation*}
\sum_{k\in\N}\dim(V_{\rho(V)+k})\e^{-2\pi k/t}
\end{equation*}
converges, we may apply the dominated convergence theorem for series and interchange the infinite summation with the process of taking the limit $t\to 0$. This shows that
\begin{equation*}
L_g(t)\stackrel{t\to 0}{\longrightarrow}\frac{1}{\lambda_{\id,g}}\tr_{V_{\rho(V)}}g.
\end{equation*}
This has to be in $\R_{\geq0}$ since $R_g(t)$ is for all $t$ and $\R_{\geq0}$ is closed in $\C$, which proves the second assertion.

Now subtract the summand for $k=0$ from $L_g(t)$ (which is in $\R$ as we just showed) and multiply by $\e^{2\pi/t}$ to see in the limit $t\to 0$ that the summand for $k=1$ is in $\R$. Repeating this process for all $k\in\N$ proves the first assertion.
% Keeping track of the subtractions and multiplications one can derive explicit bounds for the coefficients $\tr_{V_{\rho(V)+k}}g$.
\end{proof}

\section{Examples}
In this section we study the validity of Conjecture~\ref{conj:main} for examples of holomorphic \voa{}s with some of their twisted modules explicitly known.

\subsection{Permutation Orbifolds}
Let $V$ be a \voa{} of central charge $c$. Then an element $g$ of the symmetric group $\mathcal{S}_k$ acts in the obvious way on the tensor-product \voa{} $V^{\otimes k}$ of central charge $kc$ for $k\in\Ns$. In \cite{BDM02} the authors describe the $g$-twisted $V^{\otimes k}$-modules for all $g\in S_k$.

We describe the special case of holomorphic $V$. In this case, also $V^{\otimes k}$ is holomorphic, $g$-rational and there is an up to isomorphism unique irreducible $g$-twisted $V^{\otimes k}$-module $V^{\otimes k}(g)$ for all $g\in\S_k$ (see Theorem~6.4 in \cite{BDM02}).

Let us fix an automorphism $g\in\S_k$ of order $n$ and of cycle shape $\sum_{t\mid n}t^{b_t}$ with $b_t\in\N$, i.e.\ $g$ consists of $b_t$ cycles of length $t$ for each $t\mid n$. Note that $\sum_{t\mid n}tb_t=k$. Now suppose that $V$ has some conformal weight $\rho(V)\in\Z_{\leq 0}$. Then the conformal weight of the \voa{} $V^{\otimes k}$ is
\begin{equation*}
\rho(V^{\otimes k})=k\rho(V)=\rho(V)\sum_{t\mid n}tb_t.
\end{equation*}
On the other hand, from the proof of Theorem~3.9 in \cite{BDM02} we know that the conformal weight of the unique irreducible $g$-twisted $V^{\otimes k}$-module $V^{\otimes k}(g)$ is
\begin{align*}
\rho(V^{\otimes k}(g))&=\rho(V)\sum_{t\mid n}\frac{b_t}{t}+\frac{c}{24}\sum_{t\mid n}b_t\left(t-\frac{1}{t}\right)\\
&=\rho(V^{\otimes k})+\left(\frac{c}{24}-\rho(V)\right)\sum_{t\mid n}b_t\left(t-\frac{1}{t}\right).
\end{align*}
If $V$ is also $C_2$-cofinite, then Zhu's modular invariance implies that the character of $V$
\begin{equation*}
\ch_V(\tau)=Z_{\id,\id}(\tau)=\tr_{V}q^{L_0-c/24}=q^{\rho(V)-c/24}\sum_{k\in\N}\dim(V_{\rho(V)+k})q^k
\end{equation*}
is a modular form for $\SLZ$ of weight 0 and possibly some character which is holomorphic on $\H$. The valence formula implies that $\ch_V(\tau)$ has a pole at the cusp $\i\infty$, which means that $c/24>\rho(V)$. (For a proof of this well-known fact see, e.g.\ Proposition~1.4.10 in \cite{Moe16} where only the special case of $V$ of CFT-type is treated.) Clearly, this shows that
\begin{equation*}
\rho(V^{\otimes k}(g))>\rho(V^{\otimes k})
\end{equation*}
for $g\neq\id$. This proves:
\begin{prop}
The assertion of Conjecture~\ref{conj:main2} (and hence Conjecture~\ref{conj:main}) holds for holomorphic, $C_2$-cofinite \voa{}s of the form $V^{\otimes k}$ and automorphisms in $\mathcal{S}_n\leq\Aut(V^{\otimes k})$.
\end{prop}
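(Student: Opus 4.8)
The plan is to prove the sharper inequality of Conjecture~\ref{conj:main2} directly and then obtain Conjecture~\ref{conj:main} by specialising $\rho(V)=0$; the whole matter comes down to comparing two conformal weights that are already known in closed form. First I would note that for a \voa{} the vacuum lies in $V_0$, so $\rho(V)\in\Z_{\le 0}$ automatically, and that, $V$ being holomorphic and $C_2$-cofinite, the tensor power $V^{\otimes k}$ is again holomorphic, $g$-rational and $C_2$-cofinite; hence by Theorem~6.4 of \cite{BDM02} there is an up to isomorphism unique irreducible $g$-twisted module $V^{\otimes k}(g)$, which is the module occurring in Conjecture~\ref{conj:main2}.

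Next I would write the cycle shape of the fixed $g\in\S_k$ of order $n$ as $\sum_{t\mid n}t^{b_t}$, so that $k=\sum_{t\mid n}tb_t$ and thus $\rho(V^{\otimes k})=k\rho(V)=\rho(V)\sum_{t\mid n}tb_t$. The conformal weight of the twisted module is supplied by the proof of Theorem~3.9 in \cite{BDM02},
\begin{equation*}
\rho(V^{\otimes k}(g))=\rho(V)\sum_{t\mid n}\frac{b_t}{t}+\frac{c}{24}\sum_{t\mid n}b_t\Bigl(t-\frac1t\Bigr),
\end{equation*}
and subtracting the previous expression rearranges this to
\begin{equation*}
\rho(V^{\otimes k}(g))-\rho(V^{\otimes k})=\Bigl(\frac{c}{24}-\rho(V)\Bigr)\sum_{t\mid n}b_t\Bigl(t-\frac1t\Bigr).
\end{equation*}
The sum on the right is an $\N$-linear combination of the numbers $t-1/t\ge 0$ and vanishes precisely when $b_t=0$ for all $t>1$, i.e.\ when $g=\id$; so for $g\neq\id$ it is strictly positive, and it only remains to establish $c/24-\rho(V)>0$.

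For that I would invoke Zhu's modular invariance \cite{Zhu96}: the character $\ch_V(\tau)=q^{\rho(V)-c/24}\sum_{k\in\N}\dim(V_{\rho(V)+k})q^k$ transforms under $\SLZ$ with a character of finite order, so a suitable power $\ch_V^N$ is a holomorphic-on-$\H$ modular form of weight $0$ for $\SLZ$. Were $\ch_V^N$ also holomorphic at the unique cusp $\i\infty$, the valence formula would force it to be constant, hence $\rho(V)=c/24$ and $V=V_{\rho(V)}$, so $V\cong\C$ — a case in which the claim is vacuous, every element of $\S_k$ then acting trivially on $V^{\otimes k}$. Therefore $\ch_V$ has a pole at $\i\infty$, i.e.\ $\rho(V)-c/24<0$; this is the well-known fact recorded, for $V$ of CFT-type, in Proposition~1.4.10 of \cite{Moe16}, whose valence-formula proof uses only that $\ch_V$ is non-constant. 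Combining the two displays yields $\rho(V^{\otimes k}(g))>\rho(V^{\otimes k})$ for all $g\neq\id$, which is Conjecture~\ref{conj:main2} in this case; specialising $\rho(V)=0$ gives Conjecture~\ref{conj:main}.

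I expect no genuine obstacle: every analytic ingredient — the Barron--Dong--Mason formula for $\rho(V^{\otimes k}(g))$, Zhu's theorem, the valence formula — is available off the shelf, and the combinatorial positivity of $\sum_{t\mid n}b_t(t-1/t)$ is elementary. The one place deserving a moment's care is the final step, where one must check that the deduction $c/24>\rho(V)$ does not secretly rely on CFT-type; it does not, since it only needs $\ch_V$ to be a non-constant holomorphic-on-$\H$ modular form of weight $0$ for a finite-index subgroup of $\SLZ$.
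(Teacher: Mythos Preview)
Your proposal is correct and follows essentially the same route as the paper: both use the Barron--Dong--Mason formula for $\rho(V^{\otimes k}(g))$, rewrite the difference $\rho(V^{\otimes k}(g))-\rho(V^{\otimes k})$ as $(c/24-\rho(V))\sum_{t\mid n}b_t(t-1/t)$, and conclude via Zhu's modular invariance together with the valence formula that $c/24>\rho(V)$. Your only addition is the explicit handling of the degenerate case $V\cong\C$, which the paper passes over silently.
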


\subsection{Lattice \VOA{}s}
Further examples of \voa{}s of which many twisted modules and their conformal weights are explicitly known are lattice \voa{}s \cite{FLM88,Don93}.

Let $L$ be an even, positive-definite lattice, i.e.\ a free abelian group $L$ of finite rank~$\rk(L)$ equipped with a positive-definite, symmetric bilinear form $\langle\cdot,\cdot\rangle\colon L\times L\to\Z$ such that the norm $\langle\alpha,\alpha\rangle/2\in\Z$ for all $\alpha\in L$. Let $\h=L\otimes_\Z\C$ denote the complexification of the lattice $L$.

The lattice \voa{} $V_L$ associated with $L$ is simple, \strat{} and has central charge $c=\rk(L)$. Its irreducible modules are indexed by the discriminant form $L'/L$. Hence, if $L$ is unimodular, i.e.\ if $L'=L$, then $V_L$ is holomorphic.

Let $O(L)$ denote the group of automorphisms (or isometries) of the lattice $L$. The construction of $V_L$ involves a choice of group $2$-cocycle $\eps\colon L\times L\to\{\pm 1\}$. An automorphism $\nu\in O(L)$ and a function $\eta\colon L\to\{\pm 1\}$ satisfying
\begin{equation*}
\eta(\alpha)\eta(\beta)/\eta(\alpha+\beta)=\eps(\alpha,\beta)/\eps(\nu\alpha,\nu\beta)
\end{equation*}
define an automorphism $\hat{\nu}\in O(\hat{L})$ and the automorphisms obtained in this way form the subgroup $O(\hat{L})\leq\Aut(V_L)$ (see, e.g. \cite{FLM88,Bor92}). We call $\hat{\nu}$ a \emph{standard lift} if the restriction of $\eta$ to the fixed-point sublattice $L^\nu\subseteq L$ is trivial. All standard lifts of $\nu$ are conjugate in $\Aut(V_L)$ (see \cite{EMS15}, Proposition~7.1). Let $\hat{\nu}$ be a standard lift of $\nu$ and suppose that $\nu$ has order $m$. Then if $m$ is odd or if $m$ is even and $\langle\alpha,\nu^{m/2}\alpha\rangle$ is even for all $\alpha\in L$, the order of $\hat{\nu}$ is also $m$ and otherwise the order of $\hat{\nu}$ is $2m$, in which case we say $\nu$ exhibits order doubling.

For any \voa{} $V$ of CFT-type $K:=\langle\{\e^{v_0}\,|\,v\in V_1\}\rangle$ defines a subgroup of $\Aut(V)$, called the \emph{inner automorphism group}. Note that since $V$ is of CFT-type, $v_0\omega=0$ for all $v\in V_1$ so that the inner automorphisms preserve the Virasoro vector $\omega$, which is included in the definition of a \voa{} automorphism.

In general, not much is known about the structure of the automorphism group $\Aut(V)$ of a \voa{} $V$. However, for lattice \voa{}s $V_L$ it was show that
\begin{equation*}
\Aut(V_L)=K\cdot O(\hat{L}),
\end{equation*}
$K$ is a normal subgroup of $\Aut(V_L)$ and $\Aut(V_L)/K$ is isomorphic to a quotient group of $O(L)$ (see Theorem~2.1 in \cite{DN99}).

The irreducible $\hat{\nu}$-twisted modules of a lattice \voa{} $V_L$ for standard lifts $\hat{\nu}\in O(\hat{L})$ are described in \cite{DL96,BK04} but the generalisation to non-standard lifts is not difficult (see, e.g.\ Theorem~7.6 in \cite{EMS15}). Combining these results with Section~5 of \cite{Li96} we can explicitly describe the $g$-twisted $V_L$-modules for all automorphisms
\begin{equation*}
g\in K_0\cdot O(\hat{L})
\end{equation*}
with the abelian subgroup
\begin{equation*}
K_0:=\{\e^{(2\pi\i)h_0}\,|\,h\in L\otimes_\Z\Q\}\leq K.
\end{equation*}
Note that the elements of $K_0$ and $O(\hat{L})$ have finite order.

For a lattice \voa{} $V_L$ we can naturally identify the complexified lattice $\h:=L\otimes_\Z\C$ with $\hh:=\{h(-1)\otimes\ee_0\,|\,h\in\h\}\subseteq(V_L)_1$. Now let $g=\sigma_h\hat{\nu}$ with $\sigma_h:=\e^{-(2\pi\i)h_0}$ for some $h\in L\otimes\Q$ and $\hat{\nu}$ some lift of $\nu\in O(L)$. For simplicity we assume that $\hat{\nu}$ is a standard lift. We may always do so since the non-standardness of $\hat{\nu}$ can be absorbed into $\sigma_h$. Moreover, we may assume that $h\in\pi_\nu(\h)$ were $\pi_\nu=\frac{1}{m}\sum_{i=0}^{m-1}\nu^i$ is the projection of $\h$ onto the elements of $\h$ fixed by $\nu$. Indeed, it is shown in Lemma~7.3 of \cite{EMS17} that $\sigma_h\hat{\nu}$ is conjugate to $\sigma_{\pi_\nu(h)}\hat{\nu}$. Since $h\in\pi_\nu(\h)$, $\sigma_h$ and $\hat{\nu}$ commute, allowing us to apply the results in \cite{Li96} to $g=\sigma_h\hat{\nu}$.

In the following let $L$ be unimodular. Then $V_L$ is holomorphic and there is a unique $g$-twisted $V_L$-module $V_L(g)$ for each $g=\sigma_h\hat{\nu}$. Assume that $\nu$ is of order $m$ and has cycle shape $\sum_{t\mid m}t^{b_t}$ with $b_t\in\Z$, i.e.\ the extension of $\nu$ to $\h$ has characteristic polynomial $\prod_{t\mid m}(x^t-1)^{b_t}$. Note that $\sum_{t\mid m}tb_t=\rk(L)=c$. Then the conformal weight of $V_L(\hat{\nu})$ is given by
\begin{equation*}
\rho(V_L(\hat{\nu}))=\frac{1}{24}\sum_{t\mid m}b_t\left(t-\frac{1}{t}\right)+\min_{\alpha\in\pi_{\nu}(L)+h}\langle\alpha,\alpha\rangle/2.
\end{equation*}
The second term is the norm of a shortest vector in the lattice coset $\pi_\nu(L)+h$. Also, $\sum_{t\mid m}b_t\left(t-\frac{1}{t}\right)>0$ for $\nu\neq\id$.

It is clear that $\rho(V_L(\hat{\nu}))\geq 0$ and if $\rho(V_L(\hat{\nu}))=0$, then $\nu=\id$ and $h\in L$, which entails $g=\sigma_h=\id$ since $L=L'$. Hence we have proved:
\begin{prop}
The assertion of Conjecture~\ref{conj:main} holds for \strathol{} \voa{}s of the form $V_L$ for an even, unimodular, positive-definite lattice $L$ and automorphisms in $K_0\cdot O(\hat{L})\leq\Aut(V_L)$.
\end{prop}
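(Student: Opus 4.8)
The plan is to read the conclusion off the explicit formula for the conformal weight of the twisted module $V_L(g)$ recorded just above the statement. Given $g\in K_0\cdot O(\hat L)$, write $g=\sigma_h\hat\nu$ with $\hat\nu$ a lift of $\nu\in O(L)$. Since the conformal weight depends only on the isomorphism class of $V_L(g)$ and is therefore invariant under conjugation in $\Aut(V_L)$, I may assume — exactly as in the discussion preceding the statement, using Proposition~7.1 of \cite{EMS15} and Lemma~7.3 of \cite{EMS17} — that $\hat\nu$ is a standard lift and that $h\in\pi_\nu(\h)$, so that $\sigma_h$ and $\hat\nu$ commute. Writing $\sum_{t\mid m}t^{b_t}$ for the cycle shape of $\nu$ (with $m=\ord(\nu)$), the formula then reads
\[
\rho(V_L(g))=\underbrace{\tfrac{1}{24}\sum_{t\mid m}b_t\bigl(t-\tfrac1t\bigr)}_{=:A}
\;+\;\underbrace{\min_{\alpha\in\pi_\nu(L)+h}\langle\alpha,\alpha\rangle/2}_{=:B}.
\]

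First I would record the two positivity facts already quoted above: $B\geq 0$, being a minimum of values of a positive-definite quadratic form; and $A\geq 0$, with $A=0$ if and only if $\nu=\id$ (the inequality $\sum_{t\mid m}b_t(t-1/t)>0$ for $\nu\neq\id$ is the statement quoted in the text, while for $\nu=\id$ the only term is $t=1$, where $t-1/t=0$). Adding these, $\rho(V_L(g))=A+B\geq 0$, which already recovers Theorem~\ref{thm:main} for the family in question; but I want the strict inequality.

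So suppose $\rho(V_L(g))=0$. Then $A=0$ and $B=0$. From $A=0$ I obtain $\nu=\id$, hence $\pi_\nu=\id$ and $\pi_\nu(L)=L$; then $B=\min_{\alpha\in L+h}\langle\alpha,\alpha\rangle/2=0$ forces $0\in L+h$, i.e.\ $h\in L$. Because the bilinear form on $L$ is integer-valued, the inner automorphism $\sigma_h=\e^{-(2\pi\i)h_0}$ acts on each generator $\ee_\beta$ and the Fock space above it ($\beta\in L$) by the scalar $\e^{-2\pi\i\langle h,\beta\rangle}=1$, so $\sigma_h=\id$ on $V_L$; and the standard lift of $\nu=\id$ is $\id$ itself. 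Hence $g=\sigma_h\hat\nu=\id$. Contrapositively, $g\neq\id$ implies $\rho(V_L(g))>0$, which is exactly the assertion of Conjecture~\ref{conj:main} for $V_L$ and automorphisms in $K_0\cdot O(\hat L)$.

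The mathematical substance all sits in the conformal-weight formula itself, which I would not reprove: it is assembled from the description of the irreducible twisted modules of $V_L$ for standard lifts (\cite{DL96,BK04}; see also Theorem~7.6 of \cite{EMS15}), its extension to general lifts and to $g=\sigma_h\hat\nu$ via Section~5 of \cite{Li96}, and the identification of $\rho(V_L(g))$ as the sum of the $\nu$-twisted free-boson vacuum anomaly $\tfrac{1}{24}\sum_{t\mid m}b_t(t-1/t)$ and the squared length of a shortest vector in the coset $\pi_\nu(L)+h$. Granting that formula, every step above is immediate; the only point beyond routine bookkeeping is checking that the two normalising reductions (to a standard lift and to $h\in\pi_\nu(\h)$) are legitimate, and these are precisely the conjugacy statements cited above combined with conjugation-invariance of the conformal weight, so no genuine obstacle remains.
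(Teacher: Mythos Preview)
Your proposal is correct and follows essentially the same argument as the paper: both read off the conformal-weight formula $\rho(V_L(g))=A+B$, note $A,B\geq 0$, and observe that $A=B=0$ forces $\nu=\id$ and $h\in L$, whence $g=\id$. Your write-up is in fact more detailed than the paper's, which dispatches the conclusion in a single sentence just before the Proposition.
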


\section{Quantum Dimensions and Simple Currents}\label{sec:qdim}
In this section we study the relationship between quantum dimensions and Conjecture~\ref{conj:main} and Theorem~\ref{thm:main}. The case in which Conjecture~\ref{conj:main} holds will differ drastically from the case where only Theorem~\ref{thm:main} is true.

Recall that given a \voa{} $V$ and a $V$-module $W$ and assuming that the characters $\ch_V(\tau)$ and $\ch_W(\tau)$ are well-defined functions on the upper half-plane $\H$ (e.g.\ if $V$ is rational and $C_2$-cofinite and $W$ is an irreducible $V$-module) the \emph{quantum dimension} $\qdim_V(W)$ of $W$ is defined as the limit
\begin{equation*}
\qdim_V(W):=\lim_{y\to 0^+}\frac{\ch_W(iy)}{\ch_V(iy)}
\end{equation*}
if it exists.

Now suppose that $V$ is a \strathol{} \voa{} and $g$ an automorphism of $V$ of finite order. It is shown in Proposition~5.6 in \cite{EMS15} (see also Lemma 4.5.3 in \cite{Moe16} and Proposition~5.4 in \cite{DRX15}) that all irreducible $V^g$-modules are simple currents.\footnote{Recall that a module $U$ of a rational \voa{} $V$ is called \emph{simple current} if the fusion product $U\boxtimes_VW$ is irreducible for every irreducible $V$-module $W$.} For this result the positivity condition for $V^g$ is not needed.

\subsection{Positivity Condition}
If we additionally assume that Conjecture~\ref{conj:main} holds for $V$, then by Proposition~\ref{prop:cor} $V^g$ satisfies the positivity condition. This allows us to apply Proposition~4.17 in \cite{DJX13}, which states that an irreducible $V^g$-module is a simple current if and only if $\qdim_{V^g}(W)=1$. Hence, $\qdim_{V^g}(W)=1$ for all irreducible $V^g$-modules $W$.

\subsection{No Positivity Condition}
We contrast this to the situation where only Theorem~\ref{thm:main} but not necessarily Conjecture~\ref{conj:main} holds. Still all irreducible $V^g$-modules are simple currents but the quantum dimensions might not all be 1.
%Not knowing whether Conjecture~\ref{conj:main} holds true, it is certainly interesting to see what the consequences are if it does not.

In the following, let us study the simplest possible case where $g$ is an automorphism of order $n=2$ of the \strathol{} \voa{} $V$. In order to contradict Conjecture~\ref{conj:main} we have to assume that $\rho(V(g))=0$, in which case $\dim(V(g)_0)=1$. There are up to isomorphism exactly four irreducible $V^g$-modules, namely $W^{(0,0)}=V^g$, the eigenspace in $V$ of $g$ corresponding to the eigenvalue 1, $W^{(0,1)}$, the one corresponding to the eigenvalue $-1$, $W^{(1,0)}=V(g)_{\Z}$, the part of $V(g)$ with $L_0$-eigenvalues in $\Z$, and $W^{(1,1)}=V(g)_{\Z+1/2}$, the part with $L_0$-eigenvalues in $\Z+1/2$. Then the cyclic orbifold theory developed in \cite{EMS15,Moe16} finds for the fusion product that
\begin{equation*}
W^{(i,j)}\boxtimes_{V^g}W^{(k,l)}\cong W^{(i+k,j+l)}
\end{equation*}
for all $i,j\in\Z_2$ and the $S$-matrix of $V^g$ (i.e.\ the factors appearing in Zhu's modular invariance result \cite{Zhu96} applied to the characters of the irreducible $V^g$-modules under the transformation $\tau\mapsto S.\tau=-1/\tau$) is given by
\begin{equation*}
\S_{(i,j),(k,l)}=\frac{1}{2}(-1)^{-(kj+il)}.
\end{equation*}
The conformal weights of the irreducible $V^g$-modules obey
\begin{equation*}
\rho(W^{(i,j)})\in\frac{ij}{2}+\Z.
\end{equation*}
More specifically, with the assumptions we made, $\rho(W^{(0,0)})=0$, $\rho(W^{(0,1)})\in\Z_{\geq 1}$, $\rho(W^{(1,0)})=0$ and $\rho(W^{(1,1)})\in1/2+\N.$

Following the same arguments as in the proof of Lemma~4.2 in \cite{DJX13} (but considering the fact that there are two irreducible modules with minimal conformal weight instead of just one) we can show that
\begin{equation*}
\qdim_{V^g}(W^{(i,j)})=\frac{\S_{(i,j),(0,0)}+\S_{(i,j),(1,0)}}{\S_{(0,0),(0,0)}+\S_{(0,0),(1,0)}}=\frac{1+(-1)^j}{1+1}=\delta_{0,j}
\end{equation*}
where we used that $\dim(W^{(0,0)}_0)=\dim(W^{(1,0)}_0)=1$. This differs clearly from the situation where Conjecture~\ref{conj:main} holds.

\bibliographystyle{../../Latex/alpha_noseriescomma}
\bibliography{../../Latex/quellen}{}

\end{document}